\documentclass[11pt,a4paper]{amsart}

\usepackage{fancyhdr} 
\usepackage{stmaryrd} 
\usepackage{combelow} 
\usepackage{hyperref} 
\usepackage{indentfirst} 
\usepackage{titlesec} 
\usepackage[a4paper]{geometry} 
\usepackage{xstring,xifthen} 
\usepackage{xcolor} 
\usepackage{graphicx} 
\usepackage{float} 
\usepackage{subcaption}

\usepackage{yhmath} 
\usepackage{etoolbox}
\usepackage{array}
\usepackage{tabularx}
\usepackage{siunitx}
\usepackage{tikz}
\usepackage{enumitem}
\usepackage{tikz-cd}

\setlist[enumerate]{itemsep=5pt}
\setlist[itemize]{itemsep=5pt}

\newcolumntype{Y}{>{\centering\arraybackslash}X}

\AtBeginEnvironment{theorem}{\smallskip}
\AtBeginEnvironment{proposition}{\smallskip}
\AtBeginEnvironment{definition}{\smallskip}
\AtBeginEnvironment{subsec}{\smallskip}
\AtBeginEnvironment{example}{\smallskip}
\AtBeginEnvironment{remark}{\smallskip}
\AtBeginEnvironment{algorithm}{\smallskip}
\AtBeginEnvironment{lemma}{\smallskip}
\AtBeginEnvironment{corollary}{\smallskip}

\newtheorem{theorem}{Theorem}[section]

\newtheorem{lemma}[theorem]{Lemma}
\newtheorem{proposition}[theorem]{Proposition}
\theoremstyle{definition}
\newtheorem{definition}[theorem]{Definition}
\newtheorem{example}[theorem]{Example}
\newtheorem{remark}[theorem]{Remark}
\newtheorem{algorithm}[theorem]{Algorithm}
\newtheorem{subsec}[theorem]{}

\renewcommand{\P}{\mathcal{P}}
\newcommand{\Q}{\mathcal{Q}}
\newcommand{\F}{\mathcal{F}}
\newcommand{\C}{\mathcal{C}}
\newcommand{\D}{\mathcal{D}}
\renewcommand{\S}{\mathcal{S}}

\newcommand{\arc}[1]{\wideparen{#1}}
\newcommand{\larc}[1]{\ell(\arc{#1})}
\newcommand{\dH}[1]{\mathrm{d_H}(#1)}
\newcommand{\dHop}{\mathrm{d_H}}
\newcommand{\conv}{\mathrm{conv}}

\newcommand{\titlename}	
{An Algorithm for Approximating the Metric Projection onto a Superelliptic Disk}
\newcommand{\shorttitlename}
{An Algorithm for Approximating the Metric Projection onto a Superelliptic Disk}

\newcommand{\authorname}      {Valerian-Alin Fodor$^1$ \and Virgilius-Aurelian Minuță$^2$}
\newcommand{\pdfauthorname}   {Valerian-Alin Fodor, Virgilius-Aurelian Minuta}
\newcommand{\shortauthorname} {V. A. Fodor\and V. A. Minuță}

\newcommand{\universitynameA}  {$^{1,2}$Babeș-Bolyai University}
\newcommand{\facultynameA}     {Faculty of Mathematics and Computer Science}
\newcommand{\departmentnameA}  {Department of Mathematics}
\newcommand{\addressA}  	   {M. Kogălniceanu 1, RO-400084, Cluj-Napoca, Romania}

\newcommand{\emailaddressA}    {valerian.fodor@ubbcluj.ro}
\newcommand{\emailaddressB}    {virgilius.minuta@ubbcluj.ro}

\newcommand{\articleabstract}{We propose a new algorithm for approximating the metric projection onto a superelliptic disk of order \texorpdfstring{$p>1$}{p>1}, i.e., the convex hull of a superellipse (Lamé curve), and prove its convergence.}

\newcommand{\msc}{41A50, 52A27, 52A10, 52Bxx.}

\newcommand{\keywordterms}{Superellipse, Metric Projection, Polygonal Approximations.}

\def\depA{\departmentnameA}
\StrLen{\depA}[\depAlen]
\def\facA{\facultynameA}
\StrLen{\facA}[\facAlen]

\newcommand{\institutionA}{
\universitynameA\\
\ifthenelse{\facAlen>0}{\facultynameA\\}{}
\ifthenelse{\depAlen>0}{\departmentnameA\\}{}
\addressA}
\hypersetup{colorlinks = true, linkcolor = blue, anchorcolor =red, citecolor = blue, filecolor = red, urlcolor = blue, pdfauthor=\pdfauthorname, pdftitle=\titlename, pdfkeywords=\keywordterms, pdfsubject=\articleabstract}
\titleformat{\section}{\Large\bfseries}{\thesection}{1em}{}
\titleformat{\subsection}{\large\it}{\thesubsection}{1em}{}
\fancypagestyle{mypagestyle}{
  \fancyhf{}
  \fancyhead[OC]{\textit{\shorttitlename}}
  \fancyhead[EC]{\textit{\shortauthorname}}
  \fancyfoot[C]{\medskip $\leftslice\,$\thepage$\,\rightslice$ }
  
}
\fancypagestyle{firstpagestyle}{
  \fancyhf{}
  \fancyfoot[C]{\medskip $\leftslice\,$\thepage$\,\rightslice$ }
  
}
\title[\shorttitlename]{\LARGE{\titlename}}
\author[\shortauthorname]{\large{\authorname}
\medskip\\
{\footnotesize \institutionA\medskip\\
$\begin{array}{l}
\text{email}^1\text{: \texttt{\href{mailto:\emailaddressA}{\emailaddressA}}}\\
\text{email}^2\text{: \texttt{\href{mailto:\emailaddressB}{\emailaddressB}}}
\end{array}$
}}
\thanks{$^{1}$ Corresponding author}
\thanks{$^{2}$ This author was supported by a grant of the Romanian Ministry of Research, Innovation and Digitalization, CNCS-UEFISCDI, project number PN-IV-P1-PCE-2023-0060, within PNCDI IV}
\begin{document}
\begin{abstract}
\articleabstract\\[0.2cm]
\textsc{MSC 2010.} \msc\\[0.2cm]
\textsc{Keywords.} \keywordterms
\end{abstract}
\begingroup
\def\uppercasenonmath#1{} 
\let\MakeUppercase\relax 
\maketitle
\endgroup
\thispagestyle{firstpagestyle}

\titleformat{\section}
{\normalfont\Large\bfseries}
{\thesection.}   
{1em}
{}

\titleformat{\subsection}
{\bfseries}
{\thesubsection.}
{1em}
{}

\titlespacing*{\section}
  {0pt}      
  {6ex}      
  {2ex}      

\titlespacing*{\subsection}
  {0pt}      
  {4ex}      
  {2ex}      
\vspace{-1.5cm}
\section{Introduction} \label{sec:introduction}

The topic of metric projections is wide and it has been extensively studied in various contexts. 

In what follows, we briefly recall several results that are relevant to our approach.

Metric projections onto convex sets and convex bodies have been investigated from different perspectives, including the existence, uniqueness, continuity, and approximation properties of the metric projection operator. In particular, for convex bodies, various properties of the metric projection operator have been studied (see for example \cite{article:Balestro} and the references therein).

In this article, we propose an algorithm for approximating the metric projection onto a superelliptic disk of order \texorpdfstring{$p>1$}{p>1}, denoted by $\D_p$, i.e., the convex hull of a superellipse of order $p$, denoted by $\C_p$, also known as Lamé curve. For $p\geq 1$, these sets are convex bodies. The proposed method is based on approximating the superellipse, by a sequence of polygons that converge to it in the Hausdorff--Pompeiu metric. The metric projection is then computed onto these polygonal approximations, yielding approximations of the metric projection onto the superellipse.

Explicit formulas for metric projections onto polyhedral sets can be found for example in Section~6.41 of \cite{book:Deutsch2001}, and in \cite{article:Rutkowski2017,article:YuWang2025}.

\medskip
Section \ref{sec:notations} introduces the Convex Analysis framework of our algorithm, i.e. generalities about the metric projection onto a subset $C$ of a Hilbert space $X$, $P_C$; why in our paper it behaves as a single valued mapping when given additional constraints on $C$ (Remark \ref{remark:singleton}); and finally, by using the Hausdorff--Pompeiu metric, we prove in Theorem \ref{Theorem 1.25.11.2025} that, in our specific case, the operator $C\mapsto P_C(x)$  is continuous, for every $x\in X$.

In Section \ref{sec:superellipse}, we introduce our polygonal approximations of the superellipse: we take the radial homeomorphism $\varphi_p$ from the unit circle to the superellipse $\C_p$ (see Proposition \ref{prop:homeomorphism}), through which we construct polygons $\P_{p,k}$ whose vertices lie on $\C_p$ (Paragraph \ref{par:polygon}). We continue by describing the inverse images under the metric projection onto $\Q_{p,k}$ (the convex hull of $\P_{p,k}$) corresponding to the relative interiors of the facets and to the vertices of $\P_{p,k}$ (Remark \ref{remark:region_inequalities}), which we prove that they form a partition of $\mathbb{R}^2\setminus \Q_{p,k}$ (Proposition \ref{prop:partition_R_minus_polyhedron}) and we conclude by proving that the sequence of polygons $(\mathcal{P}_{p,k})_{k\ge 3}$ converges, in the Hausdorff--Pompeiu metric, to the superellipse $\C_{p}$ (Theorem \ref{convergence of the polygons}). 

These results allow us in Section \ref{sec:algorithm}, to formulate our algorithm (Algorithm  \ref{algo:main}), prove its convergence (Theorem \ref{th:algorithm_convergence}), and give an example of utilization containing also absolute error measurements (Example \ref{example:algo_main_with_exact_errors}).

\medskip
Finally, it is to be noted that the algorithm exhibits logarithmic complexity. A detailed complexity and numerical analysis, including error estimates, comparisons to other numerical methods, speed analysis and three dimensional applications, is deferred to a forthcoming Numerical Calculus paper, while the current paper focuses on the Convex Analysis foundations of the algorithm, namely in formulating the algorithm, proving its convergence and showcasing its utilization.  

\section{Notations and Preliminaries} \label{sec:notations}

\begin{subsec}In this section, we consider a real Hilbert space $X$.

Let \( C \) be a nonempty subset of \( X \) and let \( x \in X \). An element \( y_0 \in C \) is called an \emph{element of best approximation} to \( x \) from \( C \) if
$
\|x - y_0\| = \inf_{y \in C} \|x - y\|.
$

The problem of best approximation of $x$ by elements of $C$ consists in finding all the elements of best approximation from \( x \) to \( C \). The solution set is denoted by 
\[
P_C(x) := \left\{ y_0 \in C \mid \|x - y_0\| = \inf_{y \in C} \|x - y\| \right\}
\]
and it is called $\emph{the metric projection}$ of $x$ on $C$.

If every \( x \in X \) admits at least one element of best approximation from \( C \), then \( C \) is called a \emph{proximinal} set. If every  \( x \in X \) admits exactly one element of best approximation from \( C \), then \( C \) is called a \emph{Chebyshev} set.
\end{subsec}

Next, we recall some well known results of Convex Analysis (see \cite{book:Deutsch2001}).

\begin{proposition}\label{prop:least_one}
Let \( C \) be a nonempty closed subset of \( X \). Then every \( x \in X \) has at least one best approximation in \( C \).    
\end{proposition}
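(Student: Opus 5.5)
The argument I would give is the standard ``minimizing sequence plus compactness'' one. One caveat comes first: as literally stated, for an arbitrary real Hilbert space $X$ and an arbitrary nonempty closed $C$, the claim is false. In $\ell^2$, take $C=\{(1+\tfrac1n)e_n : n\ge 1\}$. This set is closed, since it is discrete with mutual distances greater than $\sqrt2$. For $x=0$ we get $\inf_{y\in C}\|x-y\|=1$, but the infimum is not attained. So I will prove the statement under one of the two hypotheses the paper actually needs: either $X$ is finite dimensional (in the application $X=\mathbb{R}^2$), or $C$ is in addition convex.

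\emph{Finite-dimensional case.} Fix $x\in X$ and put $d:=\inf_{y\in C}\|x-y\|$, which is finite because $C\neq\emptyset$.
\begin{enumerate}
\item Choose $y_n\in C$ with $\|x-y_n\|\to d$.
\item The sequence $(y_n)$ is bounded, since $\|y_n\|\le\|x\|+d+1$ for large $n$.
\item By Bolzano--Weierstrass, a subsequence $y_{n_k}$ converges to some $y_0\in X$.
\item Since $C$ is closed, $y_0\in C$.
\item By continuity of the norm, $\|x-y_0\|=\lim_k\|x-y_{n_k}\|=d$, so $y_0\in P_C(x)$.
\end{enumerate}
The only nontrivial ingredient is the compactness step, which is exactly where the infinite-dimensional counterexample breaks down.

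\emph{Closed convex case in a general Hilbert space.} Here local compactness is replaced by the parallelogram law. Apply it to $x-y_n$ and $x-y_m$:
\[
\|y_n-y_m\|^2 = 2\|x-y_n\|^2+2\|x-y_m\|^2-4\Bigl\|x-\tfrac{y_n+y_m}{2}\Bigr\|^2 .
\]
By convexity, $\tfrac{y_n+y_m}{2}\in C$, so the last norm is at least $d$. Hence
\[
\|y_n-y_m\|^2\le 2\|x-y_n\|^2+2\|x-y_m\|^2-4d^2\to 0,
\]
and $(y_n)$ is Cauchy. By completeness of $X$ it converges to some $y_0$. Closedness gives $y_0\in C$, and continuity of the norm gives $\|x-y_0\|=d$.

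The main obstacle is recognizing that mere closedness does not suffice in infinite dimensions. Once the hypothesis is corrected (finite dimension, or convexity), each branch reduces to the routine steps above. Since the rest of the paper works in $\mathbb{R}^2$ with convex bodies $\D_p$ and $\Q_{p,k}$, both versions cover every use of this proposition.
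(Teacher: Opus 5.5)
The paper gives no proof of this proposition. It is recalled, together with Propositions~\ref{prop:best_one} and~\ref{prop:2.4}, as a well-known fact with a pointer to Deutsch's book, so the only thing to compare against is the statement itself.

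Your caveat is correct. For an arbitrary real Hilbert space $X$ and a merely closed $C$ the statement is false, and your $\ell^2$ example is valid. Distinct points of $C$ are more than $\sqrt2$ apart, so every convergent sequence in $C$ is eventually constant and $C$ is closed, while $\inf_{y\in C}\|y\|=1$ is not attained.

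Both of your repaired versions are proved correctly by the standard textbook arguments:
\begin{itemize}
\item in finite dimension, a bounded minimizing sequence has a convergent subsequence;
\item for closed convex sets, the parallelogram law plus completeness gives a Cauchy minimizing sequence.
\end{itemize}
The convex branch is exactly what is needed to justify Proposition~\ref{prop:2.4} in a general Hilbert space, since the paper derives that proposition from this one.

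One refinement of your closing remark. The proof of Lemma~\ref{lemma:ball}, stated for arbitrary nonempty closed $A,B$ in a Hilbert space, does use $P_B(x)$ for a merely closed $B$, i.e.\ the false version. This is harmless for two reasons. The argument only needs some $y\in B$ with $\|x-y\|<\varepsilon$, which exists by the definition of the infimum since $\inf_{y\in B}\|x-y\|\le\dH{A,B}<\varepsilon$. Moreover, in the lemma's only application, inside Theorem~\ref{Theorem 1.25.11.2025}, one has $B=C$ closed and convex, so your convex branch already covers it.
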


\begin{proposition}\label{prop:best_one}
Let \( C \) be a convex subset of \( X \). Then every \( x \in X \) has at most one best approximation in \( C \).
\end{proposition}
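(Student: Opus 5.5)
The statement is Proposition \ref{prop:best_one}: for a convex $C\subseteq X$, each $x\in X$ has at most one best approximation in $C$. The plan is to argue by contradiction, using the convexity of $C$ together with the parallelogram law, which holds in any real Hilbert space. Suppose $y_1,y_2\in C$ are both elements of best approximation to $x$, and set
\[
d:=\inf_{y\in C}\|x-y\| = \|x-y_1\| = \|x-y_2\|.
\]
The goal is to show $y_1=y_2$.

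First, by convexity the midpoint $m:=\tfrac12(y_1+y_2)$ lies in $C$, so $\|x-m\|\ge d$. Next, apply the parallelogram law
\[
\|u+v\|^2+\|u-v\|^2 = 2\|u\|^2+2\|v\|^2
\]
to the vectors $u=x-y_1$ and $v=x-y_2$. Then $u+v = 2(x-m)$ and $u-v = y_2-y_1$, which gives
\[
4\|x-m\|^2 + \|y_1-y_2\|^2 = 2d^2+2d^2 = 4d^2 .
\]
Combining this with $\|x-m\|\ge d$ yields $\|y_1-y_2\|^2 = 4d^2 - 4\|x-m\|^2 \le 0$. Hence $y_1=y_2$, which proves the claim.

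There is no genuine obstacle here. The only point needing care is that the argument relies on the Hilbert structure, through the parallelogram law, to get strict convexity of the norm. In a general normed space the conclusion can fail. Note also that neither closedness nor completeness is required: the statement is only about \emph{at most} one best approximation, and the empty case $P_C(x)=\emptyset$ is trivially allowed. If one wished to avoid the parallelogram law, an equivalent route is the strict convexity of $t\mapsto\|x-(ty_1+(1-t)y_2)\|^2$, a quadratic in $t$ with leading coefficient $\|y_1-y_2\|^2$. I would present the parallelogram version.
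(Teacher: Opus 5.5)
Your proof is correct. Since $C$ is convex, the midpoint $m$ lies in $C$, and the parallelogram law gives $4\|x-m\|^2+\|y_1-y_2\|^2=4d^2$, which forces $y_1=y_2$; you are also right that neither closedness nor completeness is needed. The paper does not prove this proposition: it only recalls it from Deutsch's monograph. Your midpoint and parallelogram-law argument is the standard textbook proof of it.
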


As an immediate consequence of Propositions \ref{prop:least_one} and  \ref{prop:best_one} we have the following result.

\begin{proposition}
\label{prop:2.4}
Every nonempty closed convex subset of \( X \) is a  Chebyshev set.
\end{proposition}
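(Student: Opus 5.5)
The plan is to obtain Proposition \ref{prop:2.4} by simply combining the two preceding results and then checking the definition of a Chebyshev set. Let $C$ be a nonempty closed convex subset of $X$ and fix an arbitrary $x\in X$. Since $C$ is nonempty and closed, Proposition \ref{prop:least_one} gives at least one element of best approximation $y_0\in C$, so $P_C(x)\neq\emptyset$. Since $C$ is convex, Proposition \ref{prop:best_one} says $x$ has at most one best approximation in $C$, so $P_C(x)$ has at most one element. Together these give $P_C(x)=\{y_0\}$. As $x$ was arbitrary, every point of $X$ has exactly one best approximation from $C$, which is precisely the definition of a Chebyshev set.

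There is no genuine obstacle here, because all the analytic content sits in the two earlier propositions, which the paper takes from \cite{book:Deutsch2001}. If we wanted the argument to be self-contained, the substantive step would be the existence statement, Proposition \ref{prop:least_one}, in infinite dimensions. Closedness alone does not give existence in a general Hilbert space, so that step needs convexity too; in finite dimensions, which is the only case used later (in $\mathbb{R}^2$), closedness alone is enough.

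For that step I would take a minimizing sequence $(y_n)\subset C$ with $\|x-y_n\|\to d:=\inf_{y\in C}\|x-y\|$. The parallelogram law gives
\[
\|y_n-y_m\|^2 = 2\|x-y_n\|^2+2\|x-y_m\|^2-4\Bigl\|x-\tfrac{y_n+y_m}{2}\Bigr\|^2 .
\]
By convexity $\tfrac{y_n+y_m}{2}\in C$, so $\|x-\tfrac{y_n+y_m}{2}\|\ge d$. The right-hand side is then at most $2\|x-y_n\|^2+2\|x-y_m\|^2-4d^2\to 0$, so $(y_n)$ is Cauchy. By completeness of $X$ and closedness of $C$, it converges to some $y_0\in C$ with $\|x-y_0\|=d$.

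Uniqueness, Proposition \ref{prop:best_one}, follows from the same identity. If $y_0,y_1\in C$ both attain $d$, then $\|y_0-y_1\|^2\le 2d^2+2d^2-4d^2=0$, so $y_0=y_1$.
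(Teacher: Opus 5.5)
Your proposal is correct and takes the paper's route: the paper gives no separate argument for Proposition~\ref{prop:2.4}, only stating that it is an immediate consequence of Propositions~\ref{prop:least_one} and~\ref{prop:best_one}, which is exactly your first paragraph. Your side remark is also right and worth keeping: Proposition~\ref{prop:least_one} as stated (closedness alone) fails in an infinite-dimensional Hilbert space (for instance, $\{(1+\tfrac{1}{n})e_n : n\ge 1\}\subset\ell^2$ is closed but contains no nearest point to $0$), so your parallelogram-law existence argument, which uses convexity and completeness, is what actually makes the combination valid for a general $X$.
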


\begin{remark} \label{remark:singleton}
Let $C \subseteq X$ be a nonempty closed convex set. According to Proposition \ref{prop:2.4}, for all $x \in X$, $P_C(x)$ is a singleton. Therefore, in this case, $P_C$ is a single valued mapping. 
\end{remark}

\begin{proposition}\label{prop:Charbestappconvsets}
Let $C$ be a convex subset of $X$, $x^* \in X$, and $x^0 \in C$. Then $ P_C(x^*) = x^0$ if and only if
$
\langle  x - x^0, x^* - x^0 \rangle \leq 0$, for all  $x \in C.
$
\end{proposition}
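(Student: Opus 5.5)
We prove the two implications separately, using only the inner product structure of $X$ and the convexity of $C$. Throughout, write $x^0\in C$ and fix an arbitrary $x\in C$.

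\emph{Sufficiency.} Assume $\langle x - x^0, x^* - x^0\rangle \le 0$ for all $x\in C$. We expand
\[
\|x^*-x\|^2 = \|(x^*-x^0) - (x-x^0)\|^2 = \|x^*-x^0\|^2 - 2\langle x-x^0, x^*-x^0\rangle + \|x-x^0\|^2 .
\]
Both the middle term $-2\langle x-x^0, x^*-x^0\rangle$ and the last term are nonnegative, so $\|x^*-x\|\ge \|x^*-x^0\|$ for every $x\in C$. Hence $x^0$ is an element of best approximation to $x^*$ from $C$. By Proposition~\ref{prop:best_one} it is the only one, which justifies writing $P_C(x^*)=x^0$. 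Convexity is not used in this direction except for the uniqueness.

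\emph{Necessity.} Assume $P_C(x^*)=x^0$, and let $x\in C$. By convexity, $x_t := x^0 + t(x-x^0)\in C$ for all $t\in(0,1]$. Minimality of $x^0$ gives $\|x^*-x_t\|^2 \ge \|x^*-x^0\|^2$. Expanding exactly as above with $x_t-x^0=t(x-x^0)$, this becomes
\[
-2t\langle x-x^0, x^*-x^0\rangle + t^2\|x-x^0\|^2 \ge 0 .
\]
Dividing by $t>0$ gives $2\langle x-x^0,x^*-x^0\rangle \le t\|x-x^0\|^2$. Letting $t\to 0^+$ yields the desired inequality.

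The only delicate point is this variational step in the necessity direction. There we must use segments inside $C$ rather than arbitrary perturbations, and then take the limit $t\to0^+$ after dividing by $t$. Everything else is the polarization-type expansion of the norm in a Hilbert space.
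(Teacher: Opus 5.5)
Your proof is correct and is the standard argument: the paper gives no proof of its own but recalls this result from Deutsch's book, where it is proved the same way (norm expansion for sufficiency, and the points $x^0+t(x-x^0)\in C$ with $t\to 0^+$ for necessity). One small sharpening: in the sufficiency direction the term $\|x-x^0\|^2$ already gives $\|x^*-x\|>\|x^*-x^0\|$ for $x\neq x^0$, so uniqueness follows directly and you need neither Proposition~\ref{prop:best_one} nor convexity there.
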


\begin{remark}\label{remarknormalcone}
From a geometric point of view, the property of Proposition \ref{prop:Charbestappconvsets}, that  
$
\langle  x - x^0, x^* - x^0 \rangle \leq 0$, for all  $x \in C.
$ 
shows that  
$x^*-x^0$ belongs to the normal cone to $C$ at $x^0$, i.e., 
$$N_C(x^0) = \{d \in X \mid  \langle x-x^0,d \rangle \leq 0 \text{ for all } x\in
C\},$$
which is a closed convex cone (see, e.g., \cite{book:BarbuPrecupanu2012}).
\end{remark}

In what follows, we recall the Hausdorff--Pompeiu distance and we prove a few results utilising this metric, which we will make use of, later in the article.

\begin{definition}
Let $A$ and $B$ be two nonempty subsets of $X$. The Hausdorff--Pompeiu distance between $A$ and $B$ is denoted by $\dHop$ and is defined by
\[
\dH{A,B}
=
\max\left\{
\sup_{x\in A}\inf_{y\in B}\|x-y\|,
\;
\sup_{y\in B}\inf_{x\in A}\|x-y\|
\right\}.
\]
\end{definition}

\begin{lemma}\label{lemma:ball}
If $A$ and $B$ are two nonempty closed sets, $\varepsilon$ is a positive real number  and $\dH{A,B} < \varepsilon$, then
$
A \subseteq B + B_{\varepsilon}. 
$
where $B_{\varepsilon}$ is the open ball centered in $0$ of radius $\varepsilon.$
\end{lemma}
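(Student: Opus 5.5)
The plan is to unwind the definition of $\dHop$ and use only the first of the two suprema. Since
\[
\sup_{x\in A}\inf_{y\in B}\|x-y\| \le \dH{A,B} < \varepsilon,
\]
every $x\in A$ satisfies $\inf_{y\in B}\|x-y\| < \varepsilon$. Here the strict inequality in the hypothesis matters: a supremum strictly below $\varepsilon$ forces each term strictly below $\varepsilon$.

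Next, fix $x\in A$. Because the infimum is strictly less than $\varepsilon$, the definition of infimum gives some $y\in B$ with $\|x-y\|<\varepsilon$. We do not need the infimum to be attained, so closedness of $B$ (or Proposition \ref{prop:least_one}) is not required here; the closedness hypothesis is just a standing assumption. Set $b:=x-y$. Then $\|b\|<\varepsilon$, so $b\in B_\varepsilon$, and $x=y+b\in B+B_\varepsilon$.

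Since $x\in A$ was arbitrary, this gives $A\subseteq B+B_\varepsilon$. The argument is symmetric, so $B\subseteq A+B_\varepsilon$ holds as well, although only the stated inclusion is needed.

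There is no real obstacle. The only point needing care is to pass from the strict bound on the supremum to the existence of a point $y$ at distance strictly less than $\varepsilon$, rather than relying on attainment of the infimum.
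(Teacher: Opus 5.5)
Your proof is correct. It differs from the paper's in one small but genuine respect.

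The paper takes $y_0 = P_B(x)$, a nearest point of $B$ to $x$, and reads off $\|x-y_0\| = \inf_{y\in B}\|x-y\| \le \dH{A,B} < \varepsilon$. You instead use the definition of the infimum to get some $y\in B$ with $\|x-y\|<\varepsilon$; this is exactly where you use the strict inequality in the hypothesis.

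Your version needs nothing about $A$ and $B$ beyond nonemptiness. It uses no closedness, no convexity and no Hilbert structure, so it works verbatim in any metric space. It also avoids a weak point in the paper's argument. The paper takes the existence of $P_B(x)$ from Proposition \ref{prop:least_one}, which for a merely closed set holds in finite dimensions but can fail in an infinite-dimensional Hilbert space. For example, the closed set $\{(1+\tfrac1n)e_n : n\ge 1\}$ in $\ell^2$ has no nearest point to $0$. So the paper's proof is valid only when $B$ is proximinal. That is the case in its actual use in Theorem \ref{Theorem 1.25.11.2025}, where $B=C$ is closed and convex.

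What attainment buys, when it is available, is a slightly sharper statement with no slack. Since $\|x-y_0\|\le \dH{A,B}$, the set $A$ lies in $B$ plus the closed ball of radius $\dH{A,B}$ itself. Your argument only gives this for every radius strictly larger than $\dH{A,B}$, which is all the lemma asks for.
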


\begin{proof}
Indeed, if $x \in A,$ let $y_0 = P_B(x).$ Then, by definition, $\|x-y_0\| = \inf_{y\in B}\|x-y\| \le \dH{A,B} < \varepsilon.$ Thus, $x = y_0 + x - y_0 \in B + B_{\varepsilon}$.
\end{proof}

\medskip
Furthermore we will prove that in certain conditions for an arbitrary point $x\in X$, the operator $C\mapsto P_C(x)$ is continuous. 

\begin{theorem}\label{Theorem 1.25.11.2025}
Let $(C_n)_{n\ge 1}$ be a sequence of nonempty, closed, bounded and convex 
subsets of $X$, and let $C$ be a nonempty, closed and convex subset of $X$ 
such that
\[
\dH{C_n, C}\longrightarrow 0 \text{ as } n\to\infty.
\]
Then $C$ is bounded, and for every $x\in X$, the sequence of metric projections converges:
\[
P_{C_n}(x)\longrightarrow P_C(x) \text{ as } n\to\infty.
\]
\end{theorem}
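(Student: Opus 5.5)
Boundedness of $C$ comes first. Fix $n$ with $\dH{C_n,C}<1$. By Lemma \ref{lemma:ball}, $C\subseteq C_n+B_1$. Since $C_n$ is bounded, $C_n+B_1$ is bounded, and therefore so is $C$. This step is immediate.

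For convergence, fix $x\in X$ and write $y_n=P_{C_n}(x)$, $y=P_C(x)$, and $\varepsilon_n=\dH{C_n,C}\to 0$. These projections are well defined and single valued by Remark \ref{remark:singleton}. I would avoid compactness arguments, since $X$ may be infinite dimensional, and instead derive a quantitative estimate from the variational characterization in Proposition \ref{prop:Charbestappconvsets}.

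The first step compares distances. Let $d_n=\|x-y_n\|$ and $d=\|x-y\|$. Each point of $C$ lies within any prescribed $\delta>\varepsilon_n$ of some point of $C_n$, and vice versa. Hence $|d_n-d|\le\varepsilon_n$, so $d_n\to d$. Moreover, all $y_n$ and $y$ lie in a common bounded set, say of radius $R$ around $x$; for instance $R=d+1$ works for all large $n$.

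The second step, which carries the main difficulty, turns the closeness of distances into closeness of the projections themselves. Pick $z_n\in C$ with $\|z_n-y_n\|<2\varepsilon_n$ (or $\varepsilon_n+1/n$ if $\varepsilon_n=0$). Proposition \ref{prop:Charbestappconvsets} applied to $C$ at $y$ gives $\langle z_n-y,x-y\rangle\le 0$. The parallelogram-type identity
\[
\|x-z_n\|^2=\|x-y\|^2+\|y-z_n\|^2-2\langle z_n-y,x-y\rangle
\]
then yields
\[
\|y-z_n\|^2\le \|x-z_n\|^2-d^2 .
\]
On the other side, $\|x-z_n\|\le d_n+2\varepsilon_n\le d+3\varepsilon_n$. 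Therefore
\[
\|y-z_n\|^2\le (d+3\varepsilon_n)^2-d^2 = 6d\varepsilon_n+9\varepsilon_n^2\to 0 .
\]
Finally, $\|y_n-y\|\le \|y_n-z_n\|+\|z_n-y\|\to 0$, which is the required convergence.

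The only points needing care are technical. The infimum in the Hausdorff distance need not be attained in $C$, since $C$ is not assumed compact; this is handled by choosing approximate nearest points, with the convention above when $\varepsilon_n=0$. Convexity of the $C_n$ is used only to make $P_{C_n}$ single valued, whereas the obtuse-angle inequality is needed only for the limit set $C$.
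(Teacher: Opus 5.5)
Your proof is correct, and it takes a genuinely different route from the paper's. The paper translates to $x=0$ and shows $\|P_{C_n}(0)\|\to\|P_C(0)\|$, which matches your first step. It then argues qualitatively. Any weak cluster point of $(P_{C_n}(0))$ lies in $C$, because closed convex sets are weakly closed. By weak lower semicontinuity its norm is at most $\|P_C(0)\|$, so by uniqueness it equals $P_C(0)$. A subsequence-of-subsequence argument gives weak convergence of the whole sequence, and weak convergence plus convergence of norms gives strong convergence (the Radon--Riesz property of Hilbert spaces).

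You instead use the variational inequality of Proposition \ref{prop:Charbestappconvsets} for the limit set $C$, together with the expansion of $\|x-z_n\|^2$. This avoids the weak topology and subsequence arguments entirely, and it yields an explicit estimate
\[
\|P_{C_n}(x)-P_C(x)\|\le 2\varepsilon_n+\sqrt{6\,d\,\varepsilon_n+9\varepsilon_n^2},\qquad d=\|x-P_C(x)\|.
\]
This is H\"older-$\tfrac12$ dependence on the Hausdorff distance, uniform for $x$ in bounded sets. The paper's qualitative argument cannot give such a rate, and it would feed directly into the error estimates the authors defer to later work.

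Your argument also shows that convexity of the $C_n$ matters only for single-valuedness, since any near-best approximations from $C_n$ converge to $P_C(x)$. You also explicitly prove that $C$ is bounded; the paper asserts this in the statement but never argues it in its proof. The paper's argument, in turn, relies only on weak compactness, weak lower semicontinuity, uniqueness of the projection and the Radon--Riesz property, so it does not depend on the inner-product form of the optimality condition.

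One small bookkeeping point concerns the case $\varepsilon_n=0$. There your choice $\|z_n-y_n\|<1/n$ gives the bound $2d/n+1/n^2$ rather than $6d\varepsilon_n+9\varepsilon_n^2$. More simply, $\mathrm{d_H}(C_n,C)=0$ with both sets closed forces $C_n=C$, so $y_n=y$.
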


\begin{proof}
For all $n\geq 1$, let $y_n = P_{C_n}(0)$ and $y = P_C(0)$. We will prove that $y_n \longrightarrow y.$
Fix $\varepsilon>0$. Since $C_n \to C$, there exists $n_0\in\mathbb{N}$ such that
$\dH{C_n,C} < \varepsilon$, for all  $n \ge n_0$ .

Let $x_n = P_{C_n}(y)$ be the metric projection of $y$ on $C_n$.  
By the definitions of the metric projection and of the Hausdorff distance, for all $n \geq n_0$, we have
$
  \|x_n - y\| \le \dH{C_n,C} < \varepsilon .
$
Therefore
$
  \|y_n\| \le \|x_n\|
  = \|y + x_n - y\|
  \le \|y\| + \|x_n - y\|
  < \|y\| + \varepsilon ,
$
so
\begin{equation}\label{eq:1}
  \|y_n\| - \|y\| < \varepsilon .
\end{equation}

Now let $z_n = P_C(y_n)$ be the metric projection of $y_n$ on $C$.
By the definitions of the metric projection and of the Hausdorff distance, for all $n \geq n_0$, we have
$
  \|y_n - z_n\| \le \dH{C_n,C} < \varepsilon .
$
It follows that
$
  \|y\| \le \|z_n\|
  = \|y_n + z_n - y_n\|
  \le \|y_n\| + \|z_n - y_n\|
  < \|y_n\| + \varepsilon ,
$
hence
\begin{equation}\label{eq:2}
  \|y\| - \|y_n\| < \varepsilon .
\end{equation}

From \eqref{eq:1} and \eqref{eq:2} we obtain
$
  \left|\|y_n\| - \|y\|\right| < \varepsilon$, for all $n \ge n_0,
$
so $\|y_n\|\to\|y\|$, as  $n\to\infty$.

Let $(y_{n_k})$ be an arbitrary weakly convergent subsequence of $(y_n)$,
and assume that $y_{n_k}\rightharpoonup z$.
For each $k$, define $z_{n_k} := P_C(y_{n_k})$.
By the definition of the Hausdorff distance, we have
\[
\|y_{n_k}-z_{n_k}\| \le \dH{C_{n_k},C} \xrightarrow{k\to\infty} 0.
\]
Hence $z_{n_k} - y_{n_k} \to 0$, and since strong convergence implies weak convergence, $z_{n_k} - y_{n_k} \rightharpoonup 0.$ Thus, $z_{n_k} \rightharpoonup z$, since $y_{n_k} \rightharpoonup z.$
Since $C$ is closed and convex, it is weakly closed, and therefore $z \in C$. 

By the weak lower semicontinuity of the norm, we obtain
$
\|z\| \le \liminf_{k\to\infty} \|y_{n_k}\|
       = \|y\|.
$
On the other hand, by definition 
$
\|y\| \le \|w\|, 
$ for all  $w \in C$,
and in particular $\|y\| \le \|z\|$.
Consequently, $\|z\| = \|y\|$.
Since the metric projection onto a closed and convex set is unique (Proposition \ref{prop:2.4}), it follows that
$z = y$.
Thus $y_{n_k} \rightharpoonup y$.

So, we have proved that every weakly convergent subsequence of $(y_n)$ converges weakly to $y$. 

Note also, that $(y_n)$ is bounded, since for all $n \geq n_0$ we have that $C_n \subseteq C + B_{\varepsilon}$ (see Lemma \ref{lemma:ball}).

Thus, we conclude that $y_n \rightharpoonup y$. Otherwise, if $(y_n)$ does not converge weakly to $y$, then, there exists a subsequence $(y_{n_l})$ such that it does not converge weakly to $y$. Then, there exists a continuous linear functional $f$ such that $(f(y_{n_l}))$ does not converge to $f (y)$. By passing through a subsequence, we can assume that 
\begin{equation}\label{eq:3}
|f(y_{n_l}) - f(y)| \geq \varepsilon\quad \text{ for all }l\in\mathbb{N}.
\end{equation}
But, since $(y_n)$ is bounded, so is $(y_{n_l})$. Thus, it has a weakly convergent subsequence. We have proved that every weakly convergent subsequence of $(y_n)$ converges weakly to $y$, hence even this subsequence converges weakly to $y$, which contradicts \eqref{eq:3}.

Furthermore, $y_n \rightharpoonup y$ together with the convergence $\|y_n\| \to \|y\|$, implies that
$y_n \to y$ strongly.

Now consider the sets $D_n := C_n - x$ for all $n \ge 1$, and $D := C - x$.
The sequence $(D_n)$ consists of nonempty, closed, bounded, and convex sets
and converges to the nonempty, closed, bounded, and convex set $D$
with respect to the Hausdorff metric. 
Applying the previous result, we have
\[
P_{D_n}(0) \longrightarrow P_D(0)  \text{ as } n\to\infty,
\]
which means that
\[
P_{C_n}(x) \longrightarrow P_C(x)  \text{ as } n\to\infty.\qedhere
\]
\end{proof}

\begin{subsec}
For a subset $S$ of $X$, the set defined by $$\mathrm{rint}(S) = \{ x \in S \mid \exists \varepsilon > 0 \, \text{such that} \, B_{\varepsilon}(x) \cap \mathrm{aff}(S) \subseteq S \}$$
is called the \textit{relative interior} of $S$. Here, $B_{\varepsilon}(x)$ denotes the open ball centered at $x$ and of radius $\varepsilon$, and $\mathrm{aff}(S)$ denotes the affine hull of $S$. 

Recall that if $X$ is finitely dimensional and $S$ is a nonempty convex set, then $\mathrm{rint}(S)$ is nonempty (see \cite[Theorem 6.2]{book:Rockafellar70}).
\end{subsec}

\begin{definition}
Let $ C \subseteq X $ be a convex set. A convex subset $ F \subseteq C $ is a \textit{face} of $ C $ if for every $t \in F$ and for every $ x, y \in C $ such that $t$ is in the open segment $(x,y)$ we have that $[x,y] \subseteq F$.
\end{definition}

\begin{lemma}\label{lemma:partition_R_n}
If $X$ is finitely dimensional and $C \subseteq X$ is a nonempty closed and convex set, then, the set $$M = \{P_C^{-1} (\mathrm{rint}(F)) \mid F \,\text{ is a nonempty face of } \, C \}
$$
forms a partition of $X$.
\end{lemma}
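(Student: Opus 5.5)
The plan is to reduce the statement to a partition result about $C$ itself. By Remark \ref{remark:singleton}, $P_C$ is a single-valued map from $X$ onto $C$; it is onto because $P_C(c)=c$ for every $c\in C$. Preimages of pairwise disjoint sets under a map are pairwise disjoint, and preimages of a cover of the codomain cover the domain. So it suffices to show that the family $\{\mathrm{rint}(F) \mid F \text{ a nonempty face of } C\}$ is a partition of $C$. Along the way we check that each member of $M$ is nonempty: each $\mathrm{rint}(F)$ is nonempty by the recalled fact \cite[Theorem 6.2]{book:Rockafellar70}, which needs every face to be convex and, in finite dimensions, is where the hypothesis on $X$ enters. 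Surjectivity then gives $P_C^{-1}(\mathrm{rint}(F))\supseteq \mathrm{rint}(F)\neq\emptyset$.

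\textbf{Covering.} Fix $c\in C$. Let $F_c$ be the intersection of all faces of $C$ containing $c$; this family is nonempty, since $C$ is a face of itself.
\begin{itemize}
\item An intersection of faces is again a face: it is convex, and the defining condition passes to intersections directly.
\item So $F_c$ is the smallest face containing $c$.
\item We claim $c\in\mathrm{rint}(F_c)$. Suppose not. Then, working in the finite-dimensional affine hull $\mathrm{aff}(F_c)$, $c$ lies on the relative boundary of the convex set $F_c$.
\item A supporting hyperplane $H$ to $F_c$ at $c$ within $\mathrm{aff}(F_c)$ can be chosen so that $F_c\not\subseteq H$.
\item Then $F_c\cap H$ is a face of $F_c$ (the standard exposed-face argument), and it contains $c$.
\item A face of a face of $C$ is a face of $C$, which follows directly from the definition.
\item So $F_c\cap H$ is a face of $C$ containing $c$ and strictly smaller than $F_c$, contradicting minimality.
\end{itemize}

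\textbf{Disjointness.} Suppose $c\in\mathrm{rint}(F)\cap\mathrm{rint}(G)$ for faces $F,G$. We show $F\subseteq G$; by symmetry also $G\subseteq F$, so $F=G$.
\begin{itemize}
\item Take any $x\in F$ with $x\neq c$.
\item Since $c\in\mathrm{rint}(F)$, we can extend the segment from $x$ through $c$ slightly beyond $c$ inside $F$. This gives a point $y\in F$ with $c\in(x,y)$.
\item Because $c\in G$, $G$ is a face of $C$, and $x,y\in C$, the face property forces $[x,y]\subseteq G$. Hence $x\in G$.
\end{itemize}

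The main obstacle is the covering step, specifically the claim that a relative-boundary point of $F_c$ lies in a strictly smaller face. This requires a supporting hyperplane in $\mathrm{aff}(F_c)$ that does not contain all of $F_c$. It follows from the separation theorem in finite dimensions: separate $c$ from $\mathrm{rint}(F_c)$, and note that a hyperplane containing all of $F_c$ would also contain $\mathrm{aff}(F_c)$, which is impossible. Alternatively, we would cite the fact that the relative interiors of the faces of a convex set partition it \cite[Theorem 18.2]{book:Rockafellar70}. That theorem settles both steps at once, and the proof then reduces to the preimage argument of the first paragraph.
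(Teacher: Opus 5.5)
Your proposal is correct and takes the same approach as the paper: pull back the partition of $C$ into the relative interiors of its nonempty faces along the single-valued map $P_C$, and get nonemptiness of each member of $M$ from $P_C(x)=x$ for $x\in\mathrm{rint}(F)$. The only difference is that the paper cites the partition of $C$ (Webster, Theorem 2.6.10) without proof, whereas you prove it yourself: a minimal-face/supporting-hyperplane argument for covering and a segment-extension argument for disjointness, both of which are sound.
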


\begin{proof}
Since the relative interiors of nonempty faces, of the closed convex set $C$ form a partition of $C$ (\cite[Theorem 2.6.10]{book:Webster1994}) and $C$ is nonempty and closed, then, for all $x \in X$, there exists a nonempty face $F$ of $C$ such that $P_C(x) \in \mathrm{rint}(F)$, i.e., $x \in P_C^{-1} (\mathrm{rint}(F)).$ Thus, 
$$\bigcup_{F \text{ is a face of }C} P_C^{-1} (\mathrm{rint}(F)) = X.$$ Furthermore, since $C$ is also convex, the sets in $M$ are disjoint. 
Also, for every nonempty face $F$ of $C$, if $x \in \mathrm{rint}(F)$, then $P_C(x) = x$ which implies that $x \in P_C^{-1} (\mathrm{rint}(F)).$ So, $P_C^{-1} (\mathrm{rint}(F))$ is nonempty.
\end{proof}

\section{Polygonal Approximations of Superellipses} \label{sec:superellipse}

\begin{subsec}
    Throughout this section and the sections that follow, if not stated otherwise, our framework is the $\mathbb{R}^2$ Euclidean space endowed with the usual inner product. Moreover, on $\mathbb R^2$ we consider the weighted $p$-norm ($p\in\mathbb{R}$, $p\geq 1$)
\[
\|(x,y)\|_{p}
=
\sqrt[p]{\left|\frac{x^{p}}{a^{p}}\right|+\left|\frac{y^{p}}{b^{p}}\right|}.
\]
\end{subsec}

\begin{subsec}\label{ss:superellipse_trivia}
Let $a$ and $b$ be two strictly positive real numbers, and $p > 1$ a real number. We denote by
\[
\C_{p}=\left\{(x,y)\in\mathbb R^2\mid\ 
\left|\frac{x^{p}}{a^{p}}\right|+\left|\frac{y^{p}}{b^{p}}\right|=1\right\}
\]
the superellipse of order $p$, also known as the Lam\'e curve (see for example Figure \ref{fig:superellipse}). Its convex hull, the superelliptic disk of order $p$, will be denoted by 
\[
\D_{p}=\conv(\C_{p})=\left\{(x,y)\in\mathbb R^2\mid\ 
\left|\frac{x^{p}}{a^{p}}\right|+\left|\frac{y^{p}}{b^{p}}\right|\leq 1\right\}.
\]

For two points $A$ and $B$ on the superellipse, we will denote the length of the small arc $\arc{AB}$ with $\larc{AB}$. 

The unit circle is defined by $\S^1=\{(x,y)\in\mathbb R^2\mid\ x^2+y^2=1\}.$

\begin{figure}[H] 
    \centering
    \begin{subfigure}{0.4\textwidth}
        \centering
        \includegraphics[width=\linewidth, trim=0 2.8cm 0 2.8cm,
  clip]{images/1-superellipse_order1.png}
        \caption{Superellipse of order 2 (ellipse)}
    \end{subfigure}
    \qquad
    \begin{subfigure}{0.4\textwidth}
        \centering
        \includegraphics[width=\linewidth, trim=0 2.8cm 0 2.8cm,
  clip]{images/1-superellipse_order2.png}
        \caption{Superellipse of order 4}
    \end{subfigure}
    \caption{Superellipses of parameters $a=5$, $b=3$}
    \label{fig:superellipse}
\end{figure}
\end{subsec}

\begin{proposition}\label{prop:homeomorphism}
For $(u,v)\in \S^1$, we define
\[
\varphi_{p}:\S^1\to \C_{p},\qquad
\varphi_{p}(u,v):=\frac{(u,v)}{\|(u,v)\|_{p}}.
\]
This is a homeomorphism, which we call the radial homeomorphism from $\S^1$ to the superellipse $\C_{p}$.
\end{proposition}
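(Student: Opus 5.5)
We prove that $\varphi_p$ is well defined, continuous and bijective; since $\S^1$ is compact and $\C_p$ is Hausdorff, it is then a homeomorphism, so no separate argument for continuity of the inverse is needed.

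\emph{Well defined and continuous.} For $(u,v)\in\S^1$ we have $(u,v)\neq(0,0)$, hence $\|(u,v)\|_p>0$, so the quotient makes sense. Moreover $\|\cdot\|_p$ is positively homogeneous of degree one: $\|\lambda(x,y)\|_p=\lambda\|(x,y)\|_p$ for $\lambda>0$. Therefore
\[
\|\varphi_p(u,v)\|_p=\frac{\|(u,v)\|_p}{\|(u,v)\|_p}=1,
\]
which means $\varphi_p(u,v)\in\C_p$. The map $(x,y)\mapsto\|(x,y)\|_p$ is continuous on $\mathbb R^2$, being a composition of absolute values, powers and a $p$-th root. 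Since it does not vanish on $\S^1$, the map $\varphi_p$ is continuous.

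\emph{Bijectivity.} Define $\psi:\C_p\to\S^1$ by $\psi(x,y)=(x,y)/\|(x,y)\|_2$, where $\|\cdot\|_2$ is the Euclidean norm. This is well defined because $(0,0)\notin\C_p$.
\begin{itemize}
\item For $(u,v)\in\S^1$, the point $\varphi_p(u,v)$ is a positive multiple of $(u,v)$. Normalizing it in the Euclidean norm therefore returns $(u,v)$, so $\psi\circ\varphi_p=\mathrm{id}_{\S^1}$.
\item For $(x,y)\in\C_p$, the point $\psi(x,y)$ is a positive multiple of $(x,y)$. Dividing it by its $p$-norm gives a positive multiple of $(x,y)$ whose $p$-norm equals $1$.
\item A positive multiple of $(x,y)$ with $p$-norm $1$ must be $(x,y)$ itself. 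Indeed, if $\lambda>0$ and $\|\lambda(x,y)\|_p=1$, then by homogeneity and $\|(x,y)\|_p=1$ we get $\lambda=1$. Hence $\varphi_p\circ\psi=\mathrm{id}_{\C_p}$.
\end{itemize}
Thus $\varphi_p$ is a continuous bijection with inverse $\psi$. The map $\psi$ is also visibly continuous, so this already gives the homeomorphism directly, and the compactness argument serves only as a backup.

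The only step requiring care is the homogeneity of $\|\cdot\|_p$, because the paper writes it as $|x^p/a^p|$. We interpret this as $|x|^p/a^p$, which is what makes sense for real $p>1$. With this reading, homogeneity is immediate: $|\lambda x|^p=\lambda^p|x|^p$ for $\lambda>0$, and taking the $p$-th root yields the factor $\lambda$. Note that the argument never uses the triangle inequality, so it does not matter whether $\|\cdot\|_p$ is a genuine norm.
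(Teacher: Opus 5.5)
Your proof is correct and follows the paper's argument essentially step for step: you show $\|\varphi_p(u,v)\|_p=1$ and that $\varphi_p$ is continuous, then use positive homogeneity of $\|\cdot\|_p$ to verify that the Euclidean normalization $\psi$ is a continuous two-sided inverse. The compactness fallback and your reading of $|x^p/a^p|$ as $|x|^p/a^p$ are harmless additions that the paper leaves implicit.
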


\begin{proof}
Since $(u,v)\neq(0,0)$ on $\S^1$, we have $\|(u,v)\|_{p}>0$, hence
$\varphi_{p}$ is well-defined. Moreover,
\[
\left\|\varphi_{p}(u,v)\right\|_{p}
=\left\|\frac{(u,v)}{\|(u,v)\|_{p}}\right\|_{p}
=\frac{\|(u,v)\|_{p}}{\|(u,v)\|_{p}}=1,
\]
so $\varphi_{p}(u,v)\in \C_{p}$. The map $(u,v)\mapsto \|(u,v)\|_{p}$ is continuous and strictly positive on
$\S^1$; hence $(u,v)\mapsto 1/\|(u,v)\|_{p}$ is continuous, and therefore
$\varphi_{p}$ is continuous.

Define
\[
\psi:\C_{p}\to \S^1,\qquad \psi(x,y):=\frac{(x,y)}{\sqrt{x^2+y^2}}.
\]
Since $(x,y)\neq(0,0)$ for all $(x,y)\in \C_{p}$, the map $\psi$ is well-defined
and continuous.

For $(u,v)\in \S^1$ we have $\varphi_{p}(u,v)=t(u,v)$ for some $t>0$, hence
\[
\psi(\varphi_{p}(u,v))=\psi(tu,tv)=\frac{t(u,v)}{t\sqrt{u^2+v^2}}=(u,v).
\]
On the other hand, for $(x,y)\in \C_{p}$ we have $\psi(x,y)=t(x,y)$ for some $t>0$. Using the homogeneity of the norm,
$
\|t(x,y)\|_{p}=t\|(x,y)\|_{p}=t,
$
and therefore
\[
\varphi_{p}(\psi(x,y))=\varphi_{p}(t(x,y))=\frac{t(x,y)}{\|t(x,y)\|_{p}}=\frac{t(x,y)}{t}=(x,y)
\]
Thus $\psi=\varphi_{p}^{-1}$. Therefore, the map $\varphi_{p}$ is a continuous bijection with a continuous inverse; hence, 
it is a homeomorphism.
\end{proof}

\begin{subsec} \label{par:polygon}
Consider a natural number $k\geq 3$. We denote by $\xi$ the primitive $k$-th root of unity
\[
\xi = \cos \frac{2\pi }{k} + i \sin \frac{2\pi }{k}.
\]

Consider the points
\[
P_t  
\left(
\frac{\cos \tfrac{2t\pi}{k}}{\|\xi^{t}\|_{p}},
\frac{\sin \tfrac{2t\pi}{k}}{\|\xi^{t}\|_{p}}
\right),
\qquad
t = 0,1,\ldots,k-1,
\]
which represent the images of the $k$-th roots of unity under the following homeomorphism (see Proposition \ref{prop:homeomorphism}):
\[
\varphi_{p}(\xi^t) = 
\frac{1}{\|\xi^{t}\|_{p}}
\left(\cos \tfrac{2t\pi}{k},\, \sin \tfrac{2t\pi}{k}\right).
\]

We will denote by $\P_{p,k}$ the polygon $P_0P_1\ldots P_{k-1}$.
\end{subsec}

\begin{example}\label{ex:radial_projections}
Considering the superellipse 
\[
\C_4:=\left\{(x,y)\in\mathbb R^2\mid\ 
\frac{x^{4}}{625}+\frac{y^{4}}{81}=1\right\},
\]
for $k=3$ we have three points (see Figure \ref{fig:polygon_3})
\[P_0\left(5, 0\right),\quad P_1\left(-\frac{5}{634}634^{3/4}\sqrt{3}, \frac{15}{634}634^{3/4}\right),\quad P_2\left(-\frac{5}{634}634^{3/4}\sqrt{3}, -\frac{15}{634}634^{3/4}\right);\]
and for $k=4$ we obtain four points (see Figure \ref{fig:polygon_4})
\[P_0\left(5,0\right),\qquad P_1\left(0,3\right),\qquad P_2\left(-5,0\right),\qquad P_3\left(0,-3\right).\]

Note that in Figure \ref{fig:polygon} the blue points are the unity roots, in cyan we have the projection segments of the unity roots to the superellipse, the black points are the images of the $k$-th roots of unity, and, with black lines, we have the edges of the polygon.

\begin{figure}[H] 
    \centering
    \begin{subfigure}{0.4\textwidth}
        \centering
        \includegraphics[width=\linewidth, trim=0 2.8cm 0 2.8cm,
  clip]{images/2-polygon_3roots}
        \caption{$k=3$}
        \label{fig:polygon_3}
    \end{subfigure}
    \qquad
    \begin{subfigure}{0.4\textwidth}
        \centering
        \includegraphics[width=\linewidth, trim=0 2.8cm 0 2.8cm,
  clip]{images/2-polygon_4roots}
        \caption{$k=4$}
        \label{fig:polygon_4}
    \end{subfigure}\\[0.2cm]
    \begin{subfigure}{0.4\textwidth}
        \centering
        \includegraphics[width=\linewidth, trim=0 2.8cm 0 2.8cm,
  clip]{images/2-polygon_5roots}
        \caption{$k=5$}
        \label{fig:polygon_5}
    \end{subfigure}
    \qquad
    \begin{subfigure}{0.4\textwidth}
        \centering
        \includegraphics[width=\linewidth, trim=0 2.8cm 0 2.8cm,
  clip]{images/2-polygon_6roots}
        \caption{$k=6$}
        \label{fig:polygon_6}
    \end{subfigure}
    \caption{The polygon $\mathcal{P}_{4,k}$}
    \label{fig:polygon}
\end{figure}
\end{example}

\begin{subsec} \label{ss:associated_polyhedron}
Each edge of the polygon is denoted by
\[
[P_t,\, P_{t+1}],
\qquad t = 0,1,\ldots,k-1,
\]
with indices understood modulo $k$. Each edge determines a support line of equation:
\[
\langle a_t, x \rangle - b_t = 0,
\qquad
x = \begin{pmatrix}x\\y\end{pmatrix},
\]
where
\[
a_t =
\begin{pmatrix}
\dfrac{\sin \tfrac{2(t+1)\pi}{k}}{\|\xi^{t+1}\|_{p}} -
\dfrac{\sin \tfrac{2t\pi}{k}}{\|\xi^{t}\|_{p}} \\[12pt]
-\left(
\dfrac{\cos \tfrac{2(t+1)\pi}{k}}{\|\xi^{t+1}\|_{p}} -
\dfrac{\cos \tfrac{2t\pi}{k}}{\|\xi^{t}\|_{p}}
\right)
\end{pmatrix},
\quad\text{and}\quad
b_t = \,\frac{\sin \tfrac{2\pi}{k}}{\|\xi^{t}\|_{p}\,\|\xi^{t+1}\|_{p}}.
\]

Since $\sin\!\tfrac{2\pi}{k} > 0$, we have $b_t > 0$.  
Evaluating at the origin $(0,0)$ gives $\langle a_t, 0 \rangle - b_t = - b_t < 0$, 
therefore the polyhedron lies entirely within each half-space
\[
\langle a_t, x \rangle \le b_t,
\quad \text{i.e., } 
\langle a_t, x \rangle \le 
\frac{\sin \tfrac{2\pi}{k}}{\|\xi^{t}\|_{p}\,\|\xi^{t+1}\|_{p}},
\quad \text{for all } t = 0,1,\ldots,k-1.
\]

Consequently, the \textbf{associated polyhedron} (the convex hull of the images $\varphi_{p}(\xi^t)$)
can be written as
\[
\mathcal{Q}_{p,k}=\conv(\mathcal{P}_{p,k})
=
\bigcap_{t=0}^{k-1}
\left\{
x \in \mathbb{R}^2 \;\mid \;
\langle a_t, x \rangle \le b_t
\right\}.
\]
\end{subsec}

\begin{remark}\label{remark:region_inequalities}
For each $t=0,1,\dots, k-1$, we denote by $\F_t$ the facet of the 
polyhedron $\mathcal{Q}_{p,k}$, that is, the edge $[P_t,\, P_{t+1}]$ of the polygon.
The inverse images under the metric projection onto $\mathcal{Q}_{p,k}$, corresponding to the relative interior of the facet $\F_t$, of the polyhedron, are rectangular semistrips (see for example Figure \ref{fig:rectangular_regions}), which we will call for simplicity \textbf{rectangular regions}, and are given by the following system of inequalities.
\[\label{eq:rectangular_region_inequalities}
\left\{\begin{aligned}
&\left(\frac{\sin\frac{2(t+1)\pi}{k}}{\|\xi^{t+1}\|_{p}}-\frac{\sin\frac{2t\pi}{k}}{\|\xi^{t}\|_{p}}\right)x
-\left(\frac{\cos\frac{2(t+1)\pi}{k}}{\|\xi^{t+1}\|_{p}}-\frac{\cos\frac{2t\pi}{k}}{\|\xi^{t}\|_{p}}\right)y
-\frac{\sin\frac{2\pi}{k}}{\|\xi^{t}\|_{p}\,\|\xi^{t+1}\|_{p}}
\ge 0,\\[8pt]
&\left(\frac{\cos\frac{2t\pi}{k}}{\|\xi^{t}\|_{p}} - \frac{\cos\frac{2(t+1)\pi}{k}}{\|\xi^{t+1}\|_{p}}\right)x
-\left(\frac{\sin\frac{2(t+1)\pi}{k}}{\|\xi^{t+1}\|_{p}}-\frac{\sin\frac{2t\pi}{k}}{\|\xi^{t}\|_{p}}\right)y
+\frac{\cos\frac{2\pi}{k}}{\|\xi^{t}\|_{p}\,\|\xi^{t+1}\|_{p}}
-\frac{1}{\|\xi^{t}\|_{p}^{2}}
< 0,\\[8pt]
&\left(\frac{\cos\frac{2t\pi}{k}}{\|\xi^{t}\|_{p}} - \frac{\cos\frac{2(t+1)\pi}{k}}{\|\xi^{t+1}\|_{p}}\right)x
-\left(\frac{\sin\frac{2(t+1)\pi}{k}}{\|\xi^{t+1}\|_{p}}-\frac{\sin\frac{2t\pi}{k}}{\|\xi^{t}\|_{p}}\right)y
-\frac{\cos\frac{2\pi}{k}}{\|\xi^{t}\|_{p}\,\|\xi^{t+1}\|_{p}}+\frac{1}{\|\xi^{t+1}\|_{p}^{2}} > 0.
\end{aligned}\right.
\]

The metric projection preimages of the vertices of the polyhedron are affine normal cones, which we will call for simplicity \textbf{triangular regions} (see for example Figure \ref{fig:triangular_regions}). The affine normal cone to $\mathcal{Q}_{p,k}$ at $\varphi_{p}(\xi^t)$ is given by the following system of inequalities.
\begin{equation*}
\label{eq:triangular_region_inequalities}
\left\{\begin{aligned}
&\left(\frac{\cos\frac{2t\pi}{k}}{\|\xi^{t}\|_{p}} - \frac{\cos\frac{2(t+1)\pi}{k}}{\|\xi^{t+1}\|_{p}}\right)x
-\left(\frac{\sin\frac{2(t+1)\pi}{k}}{\|\xi^{t+1}\|_{p}}-\frac{\sin\frac{2t\pi}{k}}{\|\xi^{t}\|_{p}}\right)y
+\frac{\cos\frac{2\pi}{k}}{\|\xi^{t}\|_{p}\,\|\xi^{t+1}\|_{p}}
-\frac{1}{\|\xi^{t}\|_{p}^{2}}
\geq 0,\\[8pt]
&\left(\frac{\cos\frac{2(t-1)\pi}{k}}{\|\xi^{t-1}\|_{p}} - \frac{\cos\frac{2t\pi}{k}}{\|\xi^{t}\|_{p}}\right)x
-\left(\frac{\sin\frac{2t\pi}{k}}{\|\xi^{t}\|_{p}}-\frac{\sin\frac{2(t-1)\pi}{k}}{\|\xi^{t-1}\|_{p}}\right)y
+\frac{\cos\frac{2\pi}{k}}{\|\xi^{t}\|_{p}\,\|\xi^{t-1}\|_{p}}-\frac{1}{\|\xi^{t}\|_{p}^{2}} \leq 0.
\end{aligned}\right.
\end{equation*}

These systems of inequalities can be obtained by elementary means, through direct computations. Alternatively, they also follow from a previous work (\cite[Theorem 7.2]{article:Fodor-Pintea2025}), where the inverse image, under the metric projection, of a facet of a polyhedral set was characterized. 

\begin{figure}[H] 
    \centering
    \begin{subfigure}{0.4\textwidth}
        \centering
        \includegraphics[width=\linewidth, trim=0 2.8cm 0 2.8cm,
  clip]{images/3-rectangular_regions}
        \caption{Rectangular regions corresponding to the facets of the polyhedron}
        \label{fig:rectangular_regions}
    \end{subfigure}
    \qquad
    \begin{subfigure}{0.4\textwidth}
        \centering
        \includegraphics[width=\linewidth, trim=0 2.8cm 0 2.8cm,
  clip]{images/3-triangular_regions}
        \caption{Triangular regions corresponding to the vertices of the polyhedron}
        \label{fig:triangular_regions}
    \end{subfigure}
    \caption{The inverse images under the metric projection onto $\mathcal{Q}_{4,4}$, for the superellipse from Example \ref{ex:radial_projections}}
    \label{fig:polyhedron_regions}
\end{figure}
\end{remark}

\begin{proposition} \label{prop:partition_R_minus_polyhedron}
Let $\P$ be a convex polygon whose vertices lie on $\C_p$, and let $P_{\Q}$ denote the metric 
projection onto the polyhedron $\Q=\conv(\P)$.

For each vertex $v$ of $\Q$, define
$$
\mathcal{V}(v)
=
\left\{
x \in \mathbb{R}^2 \setminus \mathcal{Q}
\,\middle|\,
P_{\mathcal{Q}}(x) = v
\right\},
$$
and for each edge $F$ of $\mathcal{Q}$, define
$$
\mathcal{F}(F)
=
\left\{
x \in \mathbb{R}^2 \setminus \mathcal{Q}
\,\middle|\,
P_{\mathcal{Q}}(x) \in \mathrm{rint}(F)
\right\}.
$$

Then the family of sets
$$
\{\mathcal{V}(v) \mid v \text{ vertex of } \mathcal{Q}\}
\;\cup\;
\{\mathcal{F}(F) \mid F \text{ edge of } \mathcal{Q}\}
$$
forms a partition of $\mathbb{R}^2 \setminus \mathcal{Q}$.
\end{proposition}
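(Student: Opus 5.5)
The plan is to deduce the statement from Lemma \ref{lemma:partition_R_n}, applied with $X=\mathbb{R}^2$ and $C=\Q$, and then restrict to the complement of $\Q$. First, $\Q=\conv(\P)$ is a nonempty, compact and convex set, since it is the convex hull of finitely many points. Hence Lemma \ref{lemma:partition_R_n} applies: the sets $P_\Q^{-1}(\mathrm{rint}(F))$, where $F$ ranges over the nonempty faces of $\Q$, form a partition of $\mathbb{R}^2$. I will then identify these faces explicitly.

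Because the vertices of $\P$ lie on $\C_p$, they are in convex position. This follows from the strict convexity of $\D_p$ for $p>1$: no point of $\C_p$ is a convex combination of other points of $\D_p$. I will also use that $\P$ has at least three vertices, which are not collinear (a line meets $\C_p$ in at most two points). Consequently $\Q$ is a two-dimensional convex polygon, and its nonempty faces are exactly of three kinds:
\begin{itemize}
\item the singletons $\{v\}$, for $v$ a vertex, with $\mathrm{rint}(\{v\})=\{v\}$;
\item the edges $F$, whose relative interiors are the open segments;
\item $\Q$ itself, whose relative interior is $\mathrm{int}(\Q)$.
\end{itemize}
This classification is standard (for instance via \cite{book:Webster1994}): a proper face lies in a supporting line, and a supporting line meets a polygon in either a vertex or an edge.

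Next, I intersect the partition with $\mathbb{R}^2\setminus\Q$. For $x\notin\Q$ we have $P_\Q(x)\neq x$, and $P_\Q(x)$ lies on the boundary of $\Q$. Indeed, if $P_\Q(x)$ were an interior point, moving a little from it toward $x$ would stay inside $\Q$ and strictly decrease the distance to $x$. So $P_\Q^{-1}(\mathrm{int}\,\Q)\cap(\mathbb{R}^2\setminus\Q)=\emptyset$. Moreover, $P_\Q^{-1}(\{v\})\cap(\mathbb{R}^2\setminus\Q)=\mathcal V(v)$ and $P_\Q^{-1}(\mathrm{rint} F)\cap(\mathbb{R}^2\setminus\Q)=\mathcal F(F)$ by definition. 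Intersecting a partition of $\mathbb{R}^2$ with a subset gives sets that are pairwise disjoint and cover that subset. Hence the family in the statement is disjoint and covers $\mathbb{R}^2\setminus\Q$.

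The remaining, and main, obstacle is showing that each member of the family is nonempty, which is required for a genuine partition. For a vertex $v$, I will take a vector $d$ in the interior of the normal cone $N_\Q(v)$ (Remark \ref{remarknormalcone}). Concretely, $d$ is the sum of the outward unit normals of the two edges adjacent to $v$. Then $x=v+d$ satisfies $P_\Q(x)=v$ by Proposition \ref{prop:Charbestappconvsets}, and $x\notin\Q$ because $\langle d,d\rangle>0$. For an edge $F$ with outward unit normal $n$ and a point $m\in\mathrm{rint}(F)$, I will take $x=m+n$. Since $\Q$ lies in the half-plane $\langle n,\cdot-m\rangle\le 0$, Proposition \ref{prop:Charbestappconvsets} gives $P_\Q(x)=m$, and clearly $x\notin\Q$. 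The delicate point is verifying that the normal cone at a vertex has nonempty interior, i.e.\ that the two adjacent edges are not parallel. This is exactly where the convex position of the vertices, coming from the strict convexity of $\C_p$, is used.
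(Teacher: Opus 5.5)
Your proposal is correct and follows the paper's proof. As there, you apply Lemma \ref{lemma:partition_R_n} to $C=\Q$, identify the nonempty faces as the vertices, the edges and $\Q$ itself, observe that $P_{\Q}^{-1}(\mathrm{rint}\,\Q)$ lies in $\Q$, and intersect with $\mathbb{R}^2\setminus\Q$.

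Your nonemptiness check is a worthwhile addition that the paper's one-line proof omits, because the lemma's witnesses lie in $\mathrm{rint}(F)\subseteq\Q$ and are discarded. It is also easier than you suggest: any $d\in N_{\Q}(v)\setminus\{0\}$, such as the outward normal of a single adjacent edge, already gives $v+d\in\mathcal{V}(v)$, so you need no interior normal vector.
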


\begin{proof}
The nonempty faces of $\mathcal{Q}$ are $\mathcal{Q}$, the edges of $\mathcal{Q}$, and the vertices of $\mathcal{Q}$. Since $x \in P_{\mathcal{Q}}^{-1}(\mathrm{rint}(\mathcal{Q}))$ if and only if $x \in \mathrm{rint} (\mathcal{Q}),$ if we consider the sets $P_{\mathcal{Q}}^{-1}(\mathrm{rint}(F)) \setminus \mathcal{Q}$, for all faces $F$ of $\mathcal{Q},$ by Lemma \ref{lemma:partition_R_n} the conclusion follows immediately. 
\end{proof}

\begin{remark}
Given the description of the inverse images of the relative interiors of the facets and of the vertices of the polyhedron $\mathcal{Q}_{p,k}$, we may determine, for any point 
$x^*\in\mathbb{R}^2\setminus \mathcal{Q}_{p,k}$, the unique region of the 
partition to which $x^*$ belongs. 

If the coordinates of $x^*$ satisfy the inequalities of a triangular region (see Remark \ref{remark:region_inequalities}) then, $x^*$ lies in the inverse image of a vertex $P_t$, i.e., 
$P_{\D_{p}}(x^*)=P_t.$

Otherwise, according to Proposition \ref{prop:partition_R_minus_polyhedron}, $x^*$ satisfies the inequalities of a rectangular region (see Remark \ref{remark:region_inequalities}), then $x^*$ lies in the inverse image of the relative interior of the facet $\mathcal{F}_t$. In this case, we compute the metric projection onto the polyhedron $\mathcal{Q}_{p,k}$. The metric projection onto the polyhedron, coincides with the metric projection onto the facet $\mathcal{F}_t$ and can be computed (see for example \cite[6.41]{book:Deutsch2001}) with the formula $$P_{\mathcal{Q}_{p,k}}(x^*) = x^* - \frac{\langle a_t, x^* \rangle - b_t}{\|a_t\|^2} a_t,
$$ 
where $a_t$ and $b_t$ are defined as in Paragraph \ref{ss:associated_polyhedron}.
\end{remark}

\begin{theorem}\label{convergence of the polygons}
The sequence of polygons $(\mathcal{P}_{p,k})_{k\ge 3}$ converges, in the 
Hausdorff--Pompeiu metric, to the superellipse $\C_{p}$. More precisely,
\[
\dH{\mathcal{P}_{p,k},\, \C_{p}} \longrightarrow 0 
 \text{ as } k \to \infty.
\]

Consequently, the sequence of polyhedrons $(\mathcal{Q}_{p,k})_{k\ge 3}$ converges, in the 
Hausdorff--Pompeiu metric, to the superelliptic disk $\D_{p}$:
\[
\dH{\mathcal{Q}_{p,k},\, \D_{p}} \longrightarrow 0 
 \text{ as } k \to \infty.    
\]
\end{theorem}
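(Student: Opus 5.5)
The plan is to exploit the uniform continuity of the radial homeomorphism $\varphi_p$ from Proposition~\ref{prop:homeomorphism}. Since $\S^1$ is compact, $\varphi_p$ is uniformly continuous. Hence for every $\varepsilon>0$ there is $\delta>0$ such that $\|\varphi_p(u)-\varphi_p(w)\|<\varepsilon$ whenever $u,w\in\S^1$ satisfy $\|u-w\|<\delta$. Consecutive $k$-th roots of unity are at distance $2\sin(\pi/k)$, so there is $k_0$ with $2\sin(\pi/k)<\delta$ for all $k\ge k_0$.

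Fix such a $k$. For any $u\in\S^1$ on the closed arc from $\xi^t$ to $\xi^{t+1}$, both $\|u-\xi^t\|$ and $\|u-\xi^{t+1}\|$ are at most $2\sin(\pi/k)<\delta$. Writing $\arc{P_tP_{t+1}}=\varphi_p(\text{arc from }\xi^t\text{ to }\xi^{t+1})$, this gives the following.

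\emph{Each point of $\C_p$ is close to $\P_{p,k}$.} A point $z=\varphi_p(u)\in\arc{P_tP_{t+1}}$ is within $\varepsilon$ of the vertex $P_t\in\P_{p,k}$.

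\emph{Each point of $\P_{p,k}$ is close to $\C_p$.} A point $q=(1-\lambda)P_t+\lambda P_{t+1}$ of the edge $[P_t,P_{t+1}]$ satisfies $\|q-P_t\|\le\|P_{t+1}-P_t\|<\varepsilon$, and $P_t\in\C_p$.

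Both one-sided suprema in the definition of $\dHop$ are therefore at most $\varepsilon$. This proves $\dH{\P_{p,k},\C_p}\le\varepsilon$ for $k\ge k_0$, which is the first claim.

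For the second claim I would use the standard inequality $\dH{\conv A,\conv B}\le \dH{A,B}$ for compact sets in $\mathbb{R}^2$. Its proof is short. If every $a\in A$ lies in $B+\overline{B}_r$, then $A\subseteq \conv B+\overline{B}_r$. The right-hand side is convex, being a sum of convex sets, so it also contains $\conv A$. Exchanging the roles of $A$ and $B$ gives the other inclusion. By Paragraph~\ref{ss:associated_polyhedron} we have $\conv\P_{p,k}=\Q_{p,k}$, and by Paragraph~\ref{ss:superellipse_trivia} we have $\conv\C_p=\D_p$. The first claim then yields $\dH{\Q_{p,k},\D_p}\to0$.

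The only points needing care are two facts used implicitly above. First, the vertices $P_0,\dots,P_{k-1}$ appear in angular order, so the polygon edges are exactly the chords $[P_t,P_{t+1}]$; this holds because $\varphi_p$ preserves the polar angle, being a positive rescaling. Second, every point of $\C_p$ lies on one of the arcs $\arc{P_tP_{t+1}}$; this follows because the arcs of $\S^1$ between consecutive roots cover $\S^1$ and $\varphi_p$ is surjective. I expect the main obstacle to be stating the convex hull step cleanly, in particular checking that the closed $\varepsilon$-neighbourhood of a convex set is convex and that $\D_p$ really equals $\conv\C_p$; the paper already asserts the latter.
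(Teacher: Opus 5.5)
Your proof is correct, but it takes a different and more elementary route than the paper's.

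The paper first bounds both one-sided Hausdorff distances by $\max_t \larc{P_tP_{t+1}}$. It then uses uniform continuity of $\gamma(\theta)=\varphi_p(\cos\theta,\sin\theta)$ only to make the chords $\|P_{t+1}-P_t\|$ small. To turn small chords into small arcs, it needs the chord--arc property of $\C_p$, which is an external result cited from Pommerenke and which also presupposes rectifiability.

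You instead apply uniform continuity of $\varphi_p$ to every point of the arc of $\S^1$ between $\xi^t$ and $\xi^{t+1}$. Each such point lies within $2\sin\frac{\pi}{k}$ of both endpoints, so the whole image arc lies in the $\varepsilon$-ball around $P_t$, and arc length never enters.

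The paper's route expresses the error through a natural geometric quantity, the maximal arc length. Yours gives $\dH{\P_{p,k},\C_p}\le\omega\bigl(2\sin\frac{\pi}{k}\bigr)$, where $\omega$ is the modulus of continuity of $\varphi_p$. Since $\varphi_p$ is Lipschitz on $\S^1$ (the weighted $p$-norm is Lipschitz and bounded below there), this also yields an $O(1/k)$ rate at no extra cost.

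For the second assertion, the paper's proof only says ``consequently'' and gives no argument. Your inequality $\dH{\conv A,\conv B}\le\dH{A,B}$, obtained from the convexity of $\conv B+\overline{B}_r$, supplies that missing step.
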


\begin{proof}
\textbf{Step 1 (from chord to arc).}
Let $x\in \mathcal{P}_{p,k}$ be an arbitrary point. Then $x$ belongs to an edge $\F_t=[P_t,P_{t+1}]$, for some $t\in\{0,1,\ldots,k-1\}$.
Since $P_t,P_{t+1}\in \C_{p}$, we have
\[
\inf_{y\in \C_{p}}\|x-y\|
\le
\min\{\|x-P_t\|,\|x-P_{t+1}\|\}
\le
\|P_{t+1}-P_t\|.
\]

Moreover, the length of the chord is bounded by the length of the arc:
$
\|P_{t+1}-P_t\|\le \larc{P_tP_{t+1}}.$
Hence,
\[
\inf_{y\in \C_{p}}\|x-y\|
\le
\larc{P_tP_{t+1}}.
\]
Taking the supremum over all $x\in \P_{p,k}$ yields
\[
\sup_{x\in P_{p,k}}\inf_{y\in \C_{p}}\|x-y\|
\le
\max_{0\le t\le k-1}\larc{P_tP_{t+1}}.
\]

\smallskip
\textbf{Step 2 (from arc to chord).}
Let $y\in \C_{p}$ be arbitrary. Then $y$ lies on an arc $\arc{P_tP_{t+1}}$, for some $t\in\{0,1,\ldots,k-1\}$.
Since the chord $[P_t,P_{t+1}]$ is contained in the polygon $\P_{p,k}$, we obtain
\[
\inf_{x\in \P_{p,k}}\|y-x\|
\le
\min\{\|y-P_t\|,\|y-P_{t+1}\|\}.
\]

The distance from $y$ to an endpoint of $[P_t,P_{t+1}]$ is bounded above by the length of the corresponding subarc, and thus by the total arc length:
\[
\min\{\|y-P_t\|,\|y-P_{t+1}\|\}
\le
\larc{P_tP_{t+1}}.
\]

Therefore,
\[
\inf_{x\in \P_{p,k}}\|y-x\|
\le
\larc{P_tP_{t+1}}.
\]

Taking the supremum over all $y\in \C_{p}$ gives
\[
\sup_{y\in \C_{p}}\inf_{x\in \P_{p,k}}\|y-x\|
\le
\max_{0\le t\le k-1}\larc{P_tP_{t+1}}.
\]

\smallskip
\textbf{Conclusion.}
Combining \textbf{Step 1.} and \textbf{Step 2.}, we obtain
\[
\dH{\P_{p,k},\C_{p}}
\le
\max_{0\le t\le k-1}\larc{P_tP_{t+1}}.
\]

Now consider the map
\[
\gamma:[0,2\pi]\to \C_{p},\qquad \gamma(\theta)=\varphi_{p}(\cos\theta,\sin\theta).
\]

The map $\gamma$ is continuous because it is a composition of the continuous maps
$\theta \mapsto (\cos\theta,\sin\theta)$ and $\varphi_{p}$ (see Proposition \ref{prop:homeomorphism}).
Since $\gamma$ is continuous on the compact interval $[0,2\pi]$, it is uniformly continuous. Hence, for every $\varepsilon>0$ there exists $\delta>0$ such that
$
|\theta-\phi|<\delta \ \Longrightarrow\ \|\gamma(\theta)-\gamma(\phi)\|<\varepsilon.
$

Choose $k\in\mathbb{N}^{\ast}$ large enough so that $\frac{2\pi}{k}<\delta$. Then for every $t\in\{0,1,\ldots,k-1\}$, we have
\[
\|P_{t+1}-P_t\|
=
\left\|\gamma\left(\tfrac{2\pi(t+1)}{k}\right)-\gamma\left(\tfrac{2\pi t}{k}\right)\right\|
<\varepsilon.
\]

The superellipse is a chord-arc curve, i.e., there there exists a constant $c>0$ such that $\larc{AB}  \le c\,|AB|$, for any two points $A$ and $B$ on the superellipse. This follows from a general result (see for example  \cite[Theorem 7.9]{book:Pommerenke92}). 
It follows that
\[
\max_{0\le t\le k-1}\larc{P_tP_{t+1}}\longrightarrow 0 \text{ as } k\to\infty.
\]

Therefore,
$
\dH{\P_{p,k},\C_{p}}\longrightarrow 0 \text{ as } k\to\infty,
$
which proves that $(\P_{p,k})_{k\ge3}$ converges to $\C_{p}$ in the Hausdorff--Pompeiu metric.
\end{proof}

\section{Algorithm for the Metric Projection onto a Superellipse} \label{sec:algorithm}

\begin{subsec} 
    This section uses the same notations as Section \ref{sec:superellipse} and it focuses on creating and studying the following algorithm.
\end{subsec}

\begin{algorithm} \label{algo:main}
Consider a fixed point \(x^\ast\in\mathbb{R}^2\). 
If $x^*\in\D_p$, then its metric projection onto the superelliptic disk $\mathcal{D}_{p}$ is $x^*$. Otherwise, if  \(x^\ast\in\mathbb{R}^2\setminus \mathcal{D}_{p}\), choose an initial integer \(k\ge 3\) and construct the polygon \(\P_{p,k}\).

\medskip
\textbf{Step 1}
\textbf{(Initial metric projection on \(\P_{p,k}\)).} Compute
$
x^1 = P_{\P_{p,k}}(x^\ast).
$
According to Proposition \ref{prop:partition_R_minus_polyhedron} there are two possibilities:

\begin{itemize}
  \item[(a)]
  \(x^1\in\mathrm{rint}(\F_t)\) for a unique edge \(\F_t=[P_t,P_{t+1}]\).
  \item[(b)]
  \(x^1=P_t\) for some vertex \(P_t\).
\end{itemize}

\medskip
\textbf{Step 2}
\textbf{(Refinement step).}
After $x^1$ is obtained, we consider the $2k$-th roots of unity and the polygon $\P_{p,2k}$, with vertices denoted with $Q_l$, where $l \in\{ 0,1, \ldots, 2k-1\}$. Note that $P_l=Q_{2l}$ for all  $l \in\{ 0,1, \ldots, k-1\}$. 
Again, by Proposition \ref{prop:partition_R_minus_polyhedron} there are two possibilities:

\begin{itemize}
  \item[(a)]
If $x^1\in\mathrm{rint}(\F_t)$, we only need to compute the metric projection of $x^*$ onto the edges $[Q_{2t},Q_{2t+1}]$ and $[Q_{2t+1},Q_{2t+2}]$, and decide which one is at a minimum distance.  Note that indices are understood modulo $2k$. The metric projection of $x^*$ onto the new polygon $\P_{p,2k}$, coincides with the metric projection onto one of those edges. 

\item[(b)]If $x^1=P_t$, then we compute the metric projection of $x^*$ onto the adjacent edges of the vertex $Q_{2t}$ of the polygon $\P_{p,2k}$, namely $[Q_{2t-1},Q_{2t}]$ and $[Q_{2t},Q_{2t+1}]$. Again, we decide which one is at the minimum distance, and thus we obtain the metric projection of $x^*$ onto the new polygon.
\end{itemize}

We denote by $x^2$ the metric projection of $x^*$ onto the polygon $\P_{p,2k}$.

\smallskip
Repeating the refinement step, each time doubling the number of vertices
and testing only the distances between $x^*$ and two edges, we obtain a sequence $(x^n)_{n\geq 1}$
which converges to the metric projection of \(x^\ast\) onto the superellipse $\C_{p}$ (see Theorem \ref{th:algorithm_convergence}).
\end{algorithm}

We start by proving that Algorithm \ref{algo:main} is convergent.

\begin{theorem} \label{th:algorithm_convergence}
    Let \(x^\ast\in\mathbb{R}^2\) be a fixed point. 
    If $x^*\in\D_p$, then its metric projection onto the superelliptic disk $\mathcal{D}_{p}$ is $x^*$. Otherwise, if  \(x^\ast\in\mathbb{R}^2\setminus \mathcal{D}_{p}\), then for any initial integer value of $k\geq 3$, the sequence $(x^n)_{n\geq 1}$, generated by Algorithm \ref{algo:main} converges to the metric projection of \(x^\ast\) onto the superellipse $\C_{p}$.
\end{theorem}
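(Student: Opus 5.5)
The plan is to reduce the statement to Theorem~\ref{Theorem 1.25.11.2025} applied to the sets $\Q_{p,2^{n-1}k}$ and $\D_p$, using Theorem~\ref{convergence of the polygons} for the Hausdorff convergence. The case $x^*\in\D_p$ is trivial, since $P_{\D_p}(x^*)=x^*$ for a closed convex set. So assume $x^*\notin\D_p$, and set $k_n=2^{n-1}k$.

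\textbf{Step 1: identify $x^n$.} I will show that $x^n=P_{\Q_{p,k_n}}(x^*)$ for every $n$. Since the vertices of $\P_{p,k_n}$ lie on $\C_p$, we have $\Q_{p,k_n}\subseteq\D_p$, so $x^*\notin\Q_{p,k_n}$. Hence the projection onto $\Q_{p,k_n}$ lies on the boundary $\P_{p,k_n}$, and it coincides with the nearest point of the polygon.

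The refinement step also has to be justified: the edge or vertex found at level $n$ must determine which two edges of $\P_{p,2k_n}$ contain the new projection. I would argue this as follows.
\begin{itemize}
\item Let $x^n\in[P_t,P_{t+1}]$ (case (a), or case (b) with $x^n=P_t$).
\item The new polygon only adds the vertex $Q_{2t+1}$, which lies on the arc $\arc{P_tP_{t+1}}$ and hence outside $\Q_{p,k_n}$. It has the same angular position as the refined root.
\item By Proposition~\ref{prop:Charbestappconvsets}, $x^*-x^n$ lies in the normal cone at $x^n$. Using Remark~\ref{remark:region_inequalities} and the fact that $\Q_{p,k_n}\subseteq\Q_{p,2k_n}$, the new projection cannot jump to an angular sector outside the one containing $x^n$.
\item To make this rigorous, I would use that the projection onto $\Q_{p,2k_n}$ of a point outside it, restricted to its boundary, is attained where the normal cone region contains $x^*$. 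I would then check that $x^*$ lies in the union of the regions of the two candidate edges.
\end{itemize}
This locality claim is the main obstacle. It needs convexity together with the radial structure; the alternative is to accept the algorithm's definition of $x^n$ as the true projection onto $\P_{p,2k_n}$, as stated in Step~2 of Algorithm~\ref{algo:main}.

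\textbf{Step 2: Hausdorff convergence.} The sequence $(\Q_{p,k_n})$ is a subsequence of $(\Q_{p,k})_{k\ge3}$. By Theorem~\ref{convergence of the polygons}, $\dH{\Q_{p,k_n},\D_p}\to0$. The sets $\Q_{p,k_n}$ are nonempty, closed, bounded and convex, and $\D_p$ is nonempty, closed and convex. Theorem~\ref{Theorem 1.25.11.2025} therefore gives $x^n=P_{\Q_{p,k_n}}(x^*)\to P_{\D_p}(x^*)$.

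\textbf{Step 3: conclusion.} Because $x^*\notin\D_p$, the point $P_{\D_p}(x^*)$ lies on the boundary $\C_p$. It is therefore the nearest point of $\C_p$ to $x^*$: any $y\in\C_p\subseteq\D_p$ is at least as far from $x^*$, and this nearest point is unique. Hence $x^n$ converges to the metric projection of $x^*$ onto the superellipse $\C_p$.
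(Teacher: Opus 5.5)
Your proposal follows the paper's proof: Theorem~\ref{convergence of the polygons} gives Hausdorff convergence $\Q_{p,k}\to\D_p$, Theorem~\ref{Theorem 1.25.11.2025} then gives $P_{\Q_{p,k}}(x^*)\to P_{\D_p}(x^*)$, and your Step~3 only makes explicit that $P_{\D_p}(x^*)$ is the nearest point of $\C_p$, which the paper leaves implicit. The paper does not prove the two-edge locality of your Step~1 either; it simply takes $x^n=P_{\P_{p,2^{n-1}k}}(x^*)$ as part of the description of Algorithm~\ref{algo:main}, which is exactly your fallback, so your bulleted sketch (not a proof as written, as you acknowledge) is not needed for the argument.
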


\begin{proof}
    By Theorem \ref{convergence of the polygons} the sequence of polyhedrons $(\mathcal{Q}_{p,k})_{k\ge 3}$ converges in the Hausdorff--Pompeiu metric to the superelliptic disk $\D_{p}$. By Theorem \ref{Theorem 1.25.11.2025} it follows that for every $x^*\in\mathbb{R}^2$ we have $P_{\mathcal{Q}_{p,k}}(x^*)\longrightarrow P_{\D_{p}}(x^*) \text{ as } k\to\infty.$
    
    Thus, for $k$ sufficiently large, the projection of $x^*\in\mathbb{R}^2\setminus \mathcal{Q}_{p,k}$ onto the polyhedron $\mathcal{Q}_{p,k}$ provides an approximation of the metric projection of $x^*$ onto the superelliptic disk $\D_{p}$. 
\end{proof}

In the following example, we will consider a point with a known metric projection onto the superellipse, and we will apply our algorithm (Algorithm \ref{algo:main}) in order to compute its step-by-step error. 

\begin{example}\label{example:algo_main_with_exact_errors}
Consider the superellipse of parameters $a=\sqrt{15}$ and  $b=\sqrt{5}$ and of order $4$:
\[\C_4:=\left\{(x,y)\in\mathbb R^2\mid\ 
\frac{x^{4}}{225}+\frac{y^{4}}{25}=1\right\}.\]

We consider the point $x^0(3,2)\in\C_{4}$. The tangent line to the superellipse in $x^0(3,2)$  is $$\frac{3}{25}x+\frac{8}{25}y=1.$$

Let $x^\ast(3.75,4)$ be a point on the normal of this tangent (see Figure \ref{fig:tangent}). Hence we know that $P_{\C_{4}}(x^\ast)=x^0.$

We begin applying Algorithm \ref{algo:main}, by choosing an initial integer \(k=6\) and constructing the polygon \(\P_{4,6}\) (see Figure \ref{fig:polygon_4_6}). Before we start, note that due to the complexity of the underlying analytical expressions, all subsequent computations are carried out using the computer algebra system Maple. Numerical quantities are evaluated in high precision arithmetic with \texttt{Digits := 50}, and final reported results are rounded to 10 decimal places.

\smallskip
\textbf{Step 1}
\textbf{(Initial metric projection on \(\P_{4,6}\)).} Drawing the rectangular and triangular regions (see Remark \ref{remark:region_inequalities} and Figure \ref{fig:ex-regions}), note that $x^*$ belongs to the first rectangular region $P^{-1}_{\mathcal{Q}_{4,6}}(\mathrm{rint} (\F_0))$, hence $x^1 = P_{\P_{4,6}}(x^\ast)\in\mathrm{rint} (\F_0)$. Computationally, indeed we have the following numerical approximation $$x^1\approx(1.8242639597, 1.7661042090)\in\mathrm{rint} (\F_0)$$ (see Figure \ref{fig:ex-first_approx}), with an absolute error of 
$\|x^0-x^1\|\approx 1.1987754074.$

\begin{figure}[H] 
    \centering
    \begin{subfigure}{0.4\textwidth}
        \centering
        \includegraphics[width=\linewidth, trim=0 2.8cm 0 2.8cm,
  clip]{images/4-tangent}
        \caption{Construction of the point $x^\ast$}
        \label{fig:tangent}
    \end{subfigure}
    \qquad
    \begin{subfigure}{0.4\textwidth}
        \centering
        \includegraphics[width=\linewidth, trim=0 2.8cm 0 2.8cm,
  clip]{images/4-polygon}
        \caption{The polygon \(\P_{4,6}\)}
        \label{fig:polygon_4_6}
    \end{subfigure}\\[0.2cm]
    \begin{subfigure}{0.4\textwidth}
        \centering
        \includegraphics[width=\linewidth, trim=0 2.8cm 0 2.8cm,
  clip]{images/4-regions}
        \caption{Rectangular and triangular regions}
        \label{fig:ex-regions}
    \end{subfigure}
    \qquad
    \begin{subfigure}{0.4\textwidth}
        \centering
        \includegraphics[width=\linewidth, trim=0 2.8cm 0 2.8cm,
  clip]{images/4-first_approx}
        \caption{The point $x^1$}
        \label{fig:ex-first_approx}
    \end{subfigure}
    \caption{Step 1 of Algorithm \ref{algo:main} applied on Example \ref{example:algo_main_with_exact_errors}}
    \label{fig:example_exact_error_Step1}
\end{figure}

\smallskip
\textbf{Step 2}
\textbf{(Refinement step).} We consider the 12 twelfth roots of unity and the polygon $\P_{4,12}$, with vertices denotes by $Q_0, Q_1,\ldots Q_{11}$. We only need to consider the edges $[Q_0,Q_1]$ and $[Q_1,Q_2]$ (see Figure \ref{fig:ex-refinement}). By taking the metric projections of $x^*$ onto this edges, we note that both metric projections are given by the point $Q_1$. Visually this is clear from Figure \ref{fig:ex-refinement_regions}, where we note that $x^*$ lies in the triangular region given by the vertex $Q_1$, i.e., $x^*\in P^{-1}_{\mathcal{Q}_{4,12}}(Q_1)$. Therefore, we have obtained the next approximation point, estimated to be 
$$x^2=Q_1\approx(3.2567778122, 1.8803015465)$$
(see Figure \ref{fig:ex-second_approx}), with an absolute error of 
$\|x^0-x^2\|\approx 0.2833064852.$

\smallskip
On the next refinement step, we move onto the polygon $\P_{4,24}$ (see Figure \ref{fig:ex-2nd_refinement}), where $x^2$ is a vertex, and via our algorithm we obtain the  estimate 
$$x^3\approx (3.1716810745, 1.9037785768),$$
with an absolute error of 
$\|x^0-x^3\|\approx 0.1968068942.$

\begin{figure}[H] 
    \centering
    \begin{subfigure}{0.4\textwidth}
        \centering
        \includegraphics[width=\linewidth, trim=0 2.8cm 0 2.8cm,
  clip]{images/4-refinement}
        \caption{Refinement via the polygon $\P_{4,12}$}
        \label{fig:ex-refinement}
    \end{subfigure}
    \qquad
    \begin{subfigure}{0.4\textwidth}
        \centering
        \includegraphics[width=\linewidth, trim=0 2.8cm 0 2.8cm,
  clip]{images/4-refinement_regions}
        \caption{Refinement regions}
        \label{fig:ex-refinement_regions}
    \end{subfigure}\\[0.2cm]
    \begin{subfigure}{0.4\textwidth}
        \centering
        \includegraphics[width=\linewidth, trim=0 2.8cm 0 2.8cm,
  clip]{images/4-second_approx}
        \caption{The point $x^2$}
        \label{fig:ex-second_approx}
    \end{subfigure}
    \qquad
    \begin{subfigure}{0.4\textwidth}
        \centering
        \includegraphics[width=\linewidth, trim=0 2.8cm 0 2.8cm,
  clip]{images/4-2nd_refinement}
        \caption{Refinement via the polygon $\P_{4,24}$}
        \label{fig:ex-2nd_refinement}
    \end{subfigure}
    \caption{Step 2 of Algorithm \ref{algo:main} applied on Example \ref{example:algo_main_with_exact_errors}}
    \label{fig:example_exact_error2}
\end{figure}

\smallskip
Repeating the refinement step, we would obtain a sequence $(x^n)_{n\geq 1}$
which converges to the metric projection of \(x^\ast(3.75,4)\) onto the superellipse $\C_{4}$, namely $x^0(3,2)$. In Table \ref{table:example_errors} we present the first 10 iterations of our algorithm, together with the absolute errors obtained.

\begin{table}[H]
\centering{
\renewcommand{\arraystretch}{1.3}
\begin{tabularx}{\textwidth}{||Y|Y|Y||}
\hline
\hline
Point & Coordinates & {Absolute error} \\
\hline
\hline
$x^1$ & $(1.8242639597, 1.7661042090)$ & $1.1987754074$\\
\hline
$x^2$ & $(3.2567778122, 1.8803015465)$ & $0.2833064852$\\
\hline
$x^3$ & $(3.1716810745, 1.9037785768)$ & $0.1968068942$\\
\hline
$x^4$ & $(2.9806294820, 1.9857818721)$ & $0.0240285690$\\
\hline
$x^5$ & $(2.9956465266, 2.0016270159)$ & $0.0046475704$\\
\hline
$x^6$ & $(2.9956465266, 2.0016270159)$ & $0.0046475704$\\
\hline
$x^7$ & $(2.9956465266, 2.0016270159)$ & $0.0046475704$\\
\hline
$x^8$ & $(2.9956465266, 2.0016270159)$ & $0.0046475704$\\
\hline
$x^9$ & $(2.9956984396, 2.0016074211)$ & $0.0045920828$\\
\hline
$x^{10}$&$(3.0000939930, 1.9999594588)$ & $0.0001023635$\\
\hline
\hline
\end{tabularx}}
\caption{First ten metric projection approximations}
\label{table:example_errors}
\end{table}

Note that our sequence is non-injective, for example, in Table \ref{table:example_errors}, we have that $x^5=x^6=x^7=x^8$. This happens, because the approximation $x^5$ became so close to $x^0(3,2)$, that the refinement step took 4 additional iterations, i.e. to double the number of vertices of the polygon; in order to catch up to the distance between $x^0$ and $x^5$.
\end{example}

\phantomsection

\end{document}